\newtheorem{theorem}{Theorem}
\newtheorem{lemma}{Lemma}
\newtheorem{proposition}[theorem]{Proposition}
\newtheorem{remark}{Remark}
\newtheorem{definition}{Definition}
\newcommand{\calZ}[1]{\mathcal{#1}}
\newcommand{\set}[1]{\left\{#1\right\}}
\newcommand{\cset}[2]{\left\{#1\mid #2\right\}}
\newcommand{\Norm}[1]{\lVert #1 \rVert}
\newcommand{\id}{\mathrm{id}}
\newcommand{\Var}[1]{\mathcal{#1}}
\newcommand{\tensor}[1]{\mathfrak{#1}}
\newcommand{\vect}[1]{\mathbf{#1}}
\newcommand{\sten}[3]{\vect{#1}_{#2}^{#3}}
\newcommand{\Tang}[2]{\mathrm{T}_{#1} {#2}}
\newcommand{\R}{\mathbb{R}}
\newcommand{\deriv}[2]{\mathrm{d}_{#2}#1}
\newcommand{\refthm}[1]{{Theorem \ref{#1}}}
\newcommand{\reflem}[1]{{Lemma \ref{#1}}}
\newcommand{\refeqn}[1]{{(\ref{#1})}}
\newcommand{\refsec}[1]{{Section \ref{#1}}}
\newcommand{\reffig}[1]{{Figure \ref{#1}}}
\newcommand{\refprop}[1]{{Proposition \ref{#1}}}
\begin{document}

\title{Convergence analysis of Riemannian Gauss--Newton methods and its connection with the geometric condition number}

\author{Paul Breiding\fnref{cor2}}
\ead{breiding@mis.mpg.de}
\address{Max-Planck-Institute for Mathematics in the Sciences, Leipzig, Germany.}
\fntext[cor2]{Funding: The author was partially supported by DFG research grant BU 1371/2-2.}

\author{Nick Vannieuwenhoven\fnref{cor1}}
\ead{nick.vannieuwenhoven@cs.kuleuven.be}
\address{KU Leuven, Department of Computer Science, Leuven, Belgium.}
\fntext[cor1]{Funding: The author was supported by a Postdoctoral Fellowship of the Research Foundation---Flanders (FWO).}

\begin{abstract}
We obtain estimates of the multiplicative constants appearing in local convergence results of the Riemannian Gauss--Newton method for least squares problems on manifolds and relate them to the geometric condition number of [P. B\"urgisser and F. Cucker, Condition: The Geometry of Numerical Algorithms, 2013].

\begin{keyword}
Riemannian Gauss--Newton method, convergence analysis, geometric condition number, CPD
\end{keyword}
\end{abstract}

\maketitle
\allowdisplaybreaks

\section{Introduction}
Many problems in science and engineering are \textit{parameter identification problems} (PIPs). Herein, there is a parameter domain $\Var{M} \subset \R^M$ and a function
\(
 \Phi : \Var{M} \to \R^N.
\)
Given a point $\vect{y}$ in the image of $\Phi$, the PIP asks to identify parameters $\vect{x} \in \Var{M}$ such that $\vect{y} = \Phi(\vect{x})$; note that there could be several such parameters. For example, computing $QR$, $LU$, Cholesky, polar, singular value and eigendecompositions of a given matrix $A \in \R^{m \times n} \simeq \R^{mn}$ are examples of this. In other cases we have a tensor $\tensor{A} \in \R^{m_1 \times \cdots \times m_d}$ and need to compute CP, Tucker, block term, hierarchical Tucker, or tensor trains decompositions \cite{GKT2013}.

If the object $\widetilde{\vect{y}} \in \R^N$ whose parameters should be identified originates from applications, then usually $\widetilde{\vect{y}} \not\in \Phi(\Var{M})$. Nevertheless, in this setting one seeks parameters $\vect{x} \in \Var{M}$ such that $\vect{y} := \Phi(\vect{x})$ is as close as possible to $\widetilde{\vect{y}}$, e.g., in the Euclidean norm. This can be formulated as a nonlinear least squares problem:
\begin{align}\label{eqn_riemannian_optim_problem}
 \widetilde{\vect{y}} \mapsto \underset{\vect{x} \in \Var{M}}{\arg\min} \; \tfrac{1}{2} \Norm{ \Phi(\vect{x}) - \widetilde{\vect{y}}}^2.
\end{align}

Here, we deal with functions $\Phi$ that offer differentiability guarantees, so that continuous optimization methods can be employed for solving \refeqn{eqn_riemannian_optim_problem}. Specifically, we assume that $\Var{M}$ is a smooth embedded submanifold\footnote{{Both the optimization problem \refeqn{eqn_riemannian_optim_problem} and the condition number of maps between manifolds can be defined for abstract manifolds. Nevertheless, we consider embedded manifolds because it greatly simplifies the proof of the main theorem, allowing us to compare tangent spaces in the ambient space using Wedin's theorem \cite[Chapter III, Theorem 3.9]{MPT}. This is no longer possible for abstract manifolds, which would make the letter much more difficult to understand. In practice, many manifolds are naturally embedded.}} of $\R^M$ and that $\Phi$ is a smooth function on $\Var{M}$ \cite[Chapters 1 and 2]{Lee2013}. Hence, \refeqn{eqn_riemannian_optim_problem} is a \textit{Riemannian optimization problem} that can be solved using, e.g., Riemannian Gauss--Newton (RGN) methods \cite{AMS2008}; see \refsec{sec_riemannian_optimization}.

The sensitivity of $\vect{x} \in \Var{M}$ with respect to perturbations of $\vect{y} = \Phi(\vect{x})$ might impact the performance of these RGN methods. Let $\Psi : \Var{X} \to \Var{Y}$ be a smooth map between manifolds $\Var{X}$ and $\Var{Y}$, and let $\Tang{\vect{x}}{\Var{X}}$ denote the tangent space to the manifold $\Var{X}$ at $\vect{x}\in\Var{X}$. We recall from \cite[Section 14.3]{BC2013} that the geometric condition number $\kappa(\vect{x})$ characterizes to first-order the sensitivity of the output $\vect{y} = \Psi(\vect{x})$ to input perturbations as the spectral norm of the derivative operator $\deriv{\Psi}{\vect{x}} : \Tang{\vect{x}}{\Var{X}} \to \Tang{\Psi(\vect{x})}{\Var{Y}}$; that is, $\kappa(\vect{x}) := \| \deriv{\Psi}{\vect{x}} \| := \max_{\vect{t}\in\Tang{\vect{x}}{\Var{X}}} \| \deriv{\Psi}{\vect{x}}(\vect{t}/\|\vect{t}\|) \|$. In the case of PIPs, the geometric condition number is derived as follows.
Assume that there exists an open neighborhood $\Var{N}$ of $\vect{x} \in \Var{M}$ such that $\Var{M} = \Phi(\Var{N})$ is a smooth manifold with $m = \dim \Var{M} = \dim \Var{N}$. Since $\Phi|_\Var{N} : \Var{N} \to \Var{M}$ is a smooth map between manifolds, the inverse function theorem for manifolds \cite[Theorem 4.5]{Lee2013} entails that there exists a unique inverse function $\Phi_{\vect{x}}^{-1}$ whose derivative satisfies $\deriv{\Phi_{\vect{x}}^{-1}}{\Phi(\vect{x})} = (\deriv{\Phi}{\vect{x}})^{-1}$, provided that $\deriv{\Phi}{\vect{x}}$ is injective. Hence, the geometric condition number of the \emph{parameters}\footnote{Note that this is the geometric condition number at the output rather than the input of $\Phi_{\vect{x}}^{-1}$. The reason is that the PIP can have several $\vect{x}_i\in\Var{M}$ as solutions. Since the RGNs will only output one of these solutions, say $\vect{x}_1$, the natural question is whether this computed solution $\vect{x}_1$ is stable to perturbations of $\Phi(\vect{x}_1)$.} $\vect{x}$ is
\begin{align}\label{eqn_condition_number}
\kappa(\vect{x}) := \| \deriv{\Phi_{\vect{x}}^{-1}}{\Phi(\vect{x})} \| = \| (\deriv{\Phi}{\vect{x}})^{-1} \| = \frac{1}{\varsigma_m( \deriv{\Phi}{\vect{x}}) )},
\end{align}
where $\varsigma_m(A)$ is the $m$th largest singular value of the linear operator $A$. If the derivative is not injective, then the condition number is defined to be $\infty$.

In this letter, we show that the condition number of the parameters $\vect{x}$ in \refeqn{eqn_condition_number} appears naturally in the multiplicative constants in convergence estimates of RGN methods. Our main contribution is Theorem~\ref{thm_rgn_convergence}.

\section{The Riemannian Gauss--Newton method} \label{sec_riemannian_optimization}
Recall that a Riemannian manifold $(\Var{M},\langle \cdot , \cdot \rangle)$ is a smooth manifold $\Var{M}$, where for each ${p}\in \calZ{M}$ the tangent space $\Tang{p}{\Var{M}}$ is equipped with an inner product $\langle \cdot , \cdot \rangle_{p}$ that varies smoothly with ${p}$; see \cite[Chapter~13]{Lee2013}. The zero element of $\Tang{p}{\Var{M}}$ is denoted by $0_p$. Since we deal exclusively with embedded submanifolds $\Var{M} \subset \R^M$, we take $\langle \vect{a}, \vect{b} \rangle_p := \vect{a}^T \vect{b}$ equal to the standard inner product on $\R^M$. In the following we drop the subscript~``$p$.'' The induced norm is $\Norm{\vect{v}} = \sqrt{\langle \vect{v},\vect{v} \rangle}$.
% When there can be no confusion we also omit the index ${p}$ in both $\langle \cdot ,\cdot \rangle_{p}$ and $\Norm{\cdot}_{p}$.
The \emph{tangent bundle} of a manifold $\Var{M}$ is the smooth vector bundle $\Var{T}\Var{M} := \cset{ (p, \vect{v}) }{p \in \Var{M}, \vect{v} \in \Tang{p}{\Var{M}}}$.

In the remainder of this letter, we let $\Var{M} \subset \R^M$ be an embedded submanifold with $m = \dim \Var{M} \le M$ equipped with the standard Riemannian metric inherited from $\R^M$.
Riemannian optimization methods can be applied to the minimization of a least-squares cost function
\begin{equation}\label{least_squares}
 f : \Var{M} \to \R,\;  p \mapsto \tfrac{1}{2} \| F(p) \|^2 \quad\text{with } F : \Var{M} \to \R^N.
\end{equation}
Recall that Newton's method for minimizing $f$ consists of choosing a $x_0 \in \Var{M}$ and then generating a sequence of iterates $x_1$, $x_2$, $\ldots$ in $\Var{M}$ according to the following process:
\begin{align}\label{eqn_newton_process}
 x_{k+1} \leftarrow R_{x_{k}}(\eta_{k}) \quad\text{with } \bigl( \nabla_{x_k}^2 f \bigr) \eta_{k} = -\nabla_{x_k} f;
\end{align}
herein, $\nabla_{x_{k}} f : \Tang{x_k}{\Var{M}} \to \R$ is the \textit{Riemannian gradient}, and $\nabla_{x_{k}}^2 f : \Tang{x_k}{\Var{M}} \to \Tang{x_k}{\Var{M}}$ is the \textit{Riemannian Hessian}; for details see \cite[Chapter 6]{AMS2008}. The map $R_{x_k} : \Tang{x_k}{\Var{M}} \to \Var{M}$ is a \textit{retraction operator}.
% Recall the following definition of a retraction from \cite{AMS2008,KSV2014}.

\begin{definition}[Retraction \cite{AMS2008,KSV2014}] \label{def_retraction}
A \emph{retraction} $R$ is a map from an open subset $\Var{T}\Var{M} \supset \Var{U} \to \Var{M}$ that satisfies all of the following properties for every $p \in \Var{M}$:
\begin{enumerate}
\item $R(p,0_p) = p$;
\item $\Var{U}$ contains a neighborhood $\Var{N}$ of $(p,0_p)$ such that the restriction $R|_\Var{N}$ is smooth;
\item $R$ satisfies the local rigidity condition $\deriv{R(x,\cdot)}{0_x} = \id_{\Tang{x}{\Var{M}}}$ for all $(x, 0_x) \in \Var{N}$.
\end{enumerate}
We let $R_p(\cdot) := R(p,\cdot)$ be the retraction $R$ with foot at $p$.
\end{definition}

% Every manifold has at least one retraction, namely the exponential map \cite{AMS2008}.
A retraction is a first-order approximation of the exponential map \cite{AMS2008}; the following result is well-known.
% The following is well-known.
\begin{lemma}\label{lem_retraction}
Let $R$ be a retraction. Then for all $x\in\Var{M}$ there exists some $\delta_x >0$ such that for all $\eta\in \Tang{x}{\Var{M}}$ with $\Norm{\eta}<\delta_x$ one has $R_x(\eta) = x + \eta + \mathcal{O}(\Norm{\eta}^2).$
\end{lemma}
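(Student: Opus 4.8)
The plan is to work in coordinates on the embedded submanifold $\Var{M} \subset \R^M$ and Taylor-expand the retraction. Concretely, fix $x \in \Var{M}$ and choose a smooth local parametrization $\varphi : \Var{V} \to \Var{M}$ with $\varphi(0) = x$, where $\Var{V} \subset \R^m$ is an open neighborhood of the origin; such a chart exists since $\Var{M}$ is a smooth embedded submanifold. Composing, consider the map $g : \Var{V} \to \Var{M}$, $g(u) := R_x(\deriv{\varphi}{0}(u))$, which is smooth on a neighborhood $\Var{V}' \subset \Var{V}$ of $0$ by property~(2) of \refdef{def_retraction} (since $(x,0_x)$ lies in the smoothness neighborhood $\Var{N}$ and $\deriv{\varphi}{0}(u)$ is small when $u$ is small). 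We will compute the first-order Taylor expansion of $g$ about $0$ and compare it with $x + \deriv{\varphi}{0}(u)$.

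The key steps, in order: First, by property~(1), $g(0) = R_x(0_x) = x$. Second, by the chain rule, $\deriv{g}{0} = \deriv{R_x}{0_x} \circ \deriv{\varphi}{0}$, and by the local rigidity condition (property~(3)) we have $\deriv{R_x}{0_x} = \id_{\Tang{x}{\Var{M}}}$, so $\deriv{g}{0} = \deriv{\varphi}{0}$. Third, since $g$ is a smooth ($C^2$, indeed $C^\infty$) map on the open set $\Var{V}'$ with values in $\R^M$, Taylor's theorem with a second-order remainder gives a constant $C_x > 0$ and a radius $\rho_x > 0$ such that $\Norm{g(u) - g(0) - \deriv{g}{0}(u)} \le C_x \Norm{u}^2$ for all $\Norm{u} < \rho_x$; combining the first three steps yields $R_x(\deriv{\varphi}{0}(u)) = x + \deriv{\varphi}{0}(u) + \mathcal{O}(\Norm{u}^2)$. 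Fourth, we translate from the coordinate variable $u$ to the tangent vector $\eta := \deriv{\varphi}{0}(u) \in \Tang{x}{\Var{M}}$. Since $\deriv{\varphi}{0} : \R^m \to \Tang{x}{\Var{M}}$ is a linear isomorphism (the chart is an immersion and both spaces have dimension $m$), there are constants $0 < c_x \le C_x'$ with $c_x \Norm{u} \le \Norm{\deriv{\varphi}{0}(u)} = \Norm{\eta} \le C_x' \Norm{u}$, so $\Norm{u}^2 \le c_x^{-2}\Norm{\eta}^2$ and the error term is also $\mathcal{O}(\Norm{\eta}^2)$; moreover, shrinking the neighborhood, we may pick $\delta_x > 0$ so that $\Norm{\eta} < \delta_x$ forces the corresponding $u$ to satisfy $\Norm{u} < \rho_x$ and to lie in $\Var{V}'$. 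This gives exactly $R_x(\eta) = x + \eta + \mathcal{O}(\Norm{\eta}^2)$ for all $\eta \in \Tang{x}{\Var{M}}$ with $\Norm{\eta} < \delta_x$.

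The main obstacle — really the only subtlety — is bookkeeping the neighborhoods: the retraction is only defined and smooth on an open set $\Var{U} \supset \Var{N}$ in the tangent bundle, not on all of $\Tang{x}{\Var{M}}$, so one must ensure the constant $\delta_x$ is chosen small enough that every $\eta$ with $\Norm{\eta} < \delta_x$ stays inside the region where $R_x$ is smooth and where the chart is defined, and that this can be done at the single fixed foot point $x$ (which is all the statement requires). Everything else is a routine application of Taylor's theorem for a smooth $\R^M$-valued function of $m$ real variables, together with the equivalence of norms under the linear isomorphism $\deriv{\varphi}{0}$. A small cosmetic point: one can avoid introducing the chart altogether by viewing $R_x$ directly as a smooth map from an open subset of the $m$-dimensional vector space $\Tang{x}{\Var{M}}$ (after choosing an orthonormal basis) into $\R^M$, and Taylor-expanding that; the two approaches are identical in substance, and I would present whichever is shorter in the paper's notation.
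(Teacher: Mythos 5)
The paper offers no proof of this lemma---it is stated as ``well-known'' with a pointer to \cite{AMS2008}---so there is nothing to compare against, but your argument is correct and is exactly the standard one: Taylor-expand the smooth map $R_x$ around $0_x$ and use properties (1) and (3) of \refdef{def_retraction} to identify the zeroth- and first-order terms as $x$ and $\eta$. As you note yourself, the chart $\varphi$ is superfluous (one can Taylor-expand $R_x$ directly on the open subset of the vector space $\Tang{x}{\Var{M}}$ where it is smooth), but its presence does not affect correctness, and your bookkeeping of the neighborhoods and of the uniform second-derivative bound needed for the $\mathcal{O}(\Norm{\eta}^2)$ remainder is sound.
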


As stated in \cite[Section 8.4.1]{AMS2008}, the RGN method for minimizing $f$ is obtained by replacing the $\nabla_{x_{k}}^2 f$ in the Newton process \refeqn{eqn_newton_process} by the Gauss--Newton approximation $(\deriv{F}{x_k})^* \circ (\deriv{F}{x_k})$; herein $A^*$ denotes the adjoint of the bounded linear operator $A$ with respect to the inner product $\langle \cdot, \cdot \rangle$.
Note that an explicit expression for the update direction $\eta_k$ can be obtained. The Riemannian gradient is
\begin{align} \label{eqn_gradient}
\nabla_{x_{k}} f
= \nabla_{x_{k}} \tfrac{1}{2} \langle F(x), F(x) \rangle
% = \langle F(x_{k}), \deriv{F}{x_{k}} \rangle
= (\deriv{F}{x_k})^* \bigl( F(x_k) \bigr);
\end{align}
see \cite[Section 8.4.1]{AMS2008}.
If $\deriv{F}{x_k}$ is injective, then the solution of the system in \refeqn{eqn_newton_process} with the Riemannian Hessian replaced by the Gauss--Newton approximation is given explicitly by
\[
\eta_k = -\bigl( (\deriv{F}{x_k})^* \circ (\deriv{F}{x_k}) \bigr)^{-1} (\deriv{F}{x_k})^* \bigl( F(x_k) \bigr) =: -(\deriv{F}{x_k})^\dagger \bigl( F(x_k) \bigr).
\]

\section{Main result: Convergence analysis of the RGN method}
We prove in this section that both the convergence rate and radius of the RGN method are influenced by the condition number of the PIP at the local minimizer. In the case of PIPs, we have $F(\vect{x}) := \Phi(\vect{x}) - \widetilde{\vect{y}}$ for some fixed $\widetilde{\vect{y}} \in \R^N$. Hence, $\deriv{F}{\vect{x}} = \deriv{\Phi}{\vect{x}}$, so that the next theorem relates the geometric condition number~\refeqn{eqn_condition_number} to the convergence properties of the RGN method for solving the least-squares problem \refeqn{least_squares}.

\begin{remark}
The RGN method is only locally convergent. Practical methods are obtained by adding a globalization strategy \cite{NW2006,AMS2008} such as a line search or trust region scheme. The goal of these strategies is guaranteeing sufficient descent for global convergence, while preserving the local rate of convergence. In the main theorem, we present the analysis without globalization strategy, so as to focus on the main idea of the proof. In case of a trust region scheme, the usual approach for extending the proof consists of showing that close to a local minimizer, the unconstrained Newton step is always contained in the trust region and hence selected. This will be true if the starting point is sufficiently close to the local minimizer. Then, the local rate of convergence will be the same as when no trust region scheme is employed.
\end{remark}

In the remainder of this section, let $B_{\tau}(\vect{x})$ denote the ball of radius $\tau$ centered at $\vect{x} \in \R^M$. The following is the main theorem of this letter.

\begin{theorem}\label{thm_rgn_convergence}
% Let $\Var{M}$ be an embedded submanifold of $\R^{M}$ with $m = \dim \Var{M} \le M$ equipped with the standard Riemannian metric inherited from $\R^M$.
Assume that $\vect{x}_\star \in \Var{M}$ is a local minimum of the objective function $f$ from \refeqn{least_squares}, where $\deriv{F}{\vect{x}_\star}$ is injective. Let $\kappa := ( \varsigma_m( \deriv{F}{\vect{x}_\star} ) )^{-1} > 0$.
Then, there exists $\epsilon'>0$ such that for all $0<\alpha<1$ there exists a universal constant $c>0$ depending on $\epsilon'$, $F$, $\vect{x}_\star$, $\Var{M}$, and $R$ so that the following holds.
\begin{enumerate}
\item {(Linear convergence):} If $\frac{c\kappa^2 \Vert F(\vect{x}_\star)\Vert}{\alpha} < 1,$ then for all $\vect{x}_0 \in B_{\epsilon}(\vect{x}_\star) \cap \Var{M}$
with
\[
  \epsilon := \min\Bigl\{ \frac{1-\alpha}{c\kappa}, \frac{\alpha\epsilon'}{1+\alpha+c\kappa^2\Vert F(\vect{x}_\star)\Vert}\Bigr\},
\]
the RGN method generates a sequence $\vect{x}_0, \vect{x}_1, \ldots$ that converges linearly to $\vect{x}_\star$. In fact,
\[
 \| \vect{x}_\star - \vect{x}_{k+1} \| \le \frac{c\kappa^2 \Vert F(\vect{x}_\star)\Vert}{\alpha} \| \vect{x}_\star - \vect{x}_{k} \| + \Var{O}(\| \vect{x}_\star - \vect{x}_{k} \|^2).
\]
\item {(Quadratic convergence):} If $\vect{x}_\star$ is a zero of the objective function $f$, then for all $\vect{x}_0 \in B_{\epsilon}(\vect{x}_\star) \cap \Var{M}$ with
\[
  \epsilon := \min\Bigl\{\frac{1-\alpha}{c\kappa}, \frac{\alpha\epsilon'}{1+\alpha} \Bigr\},
\]
the RGN method generates a sequence $\vect{x}_0, \vect{x}_1, \ldots$ that converges quadratically to $\vect{x}_\star$. In fact,
\[
 \|  \vect{x}_\star -  \vect{x}_{k+1}\| \le \frac{c(\kappa+1)}{\alpha} \|  \vect{x}_\star -  \vect{x}_{k}\|^2 + \mathcal{O}( \| \vect{x}_\star -  \vect{x}_k\|^3 ).
\]
\end{enumerate}
\end{theorem}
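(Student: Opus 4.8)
The plan is to strip off the retraction with \reflem{lem_retraction}, reducing one RGN step to a perturbed Gauss--Newton step in $\R^M$, then to Taylor-expand the residual $F$ along a retraction curve on $\Var{M}$, invoke the first-order optimality condition $\nabla_{\vect{x}_\star}f=0$, and control the variation of the pseudoinverse $(\deriv{F}{\vect{x}})^\dagger$ along the iteration by a sharp perturbation estimate. The residual factor $\kappa^2\Norm{F(\vect{x}_\star)}$ and the residual-free factor $\kappa+1$ will both come out of this last ingredient. Concretely, set $\delta_k:=\Norm{\vect{x}_\star-\vect{x}_k}$ and $J_{\vect{x}}:=\deriv{F}{\vect{x}}$. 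Since $\eta_k=-J_{\vect{x}_k}^\dagger F(\vect{x}_k)\in\Tang{\vect{x}_k}{\Var{M}}$, \reflem{lem_retraction} gives $\vect{x}_{k+1}=\vect{x}_k+\eta_k+\mathcal{O}(\Norm{\eta_k}^2)$, so
\[
 \vect{x}_\star-\vect{x}_{k+1}=(\vect{x}_\star-\vect{x}_k)+J_{\vect{x}_k}^\dagger F(\vect{x}_k)+\mathcal{O}(\Norm{\eta_k}^2),
\]
and it will turn out a posteriori that $\Norm{\eta_k}=\mathcal{O}(\delta_k)$, so the retraction remainder is $\mathcal{O}(\delta_k^2)$.

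To compare $J_{\vect{x}_k}$ with $J_{\vect{x}_\star}$ in spite of their acting on different tangent spaces --- the point where the embedding $\Var{M}\subset\R^M$ is essential --- I would extend $F$ smoothly to a neighbourhood of $\vect{x}_\star$ in $\R^M$, let $P_{\vect{x}}\colon\R^M\to\Tang{\vect{x}}{\Var{M}}$ be the orthogonal projector, and identify $J_{\vect{x}}$ with $\deriv{F}{\vect{x}}\circ P_{\vect{x}}\colon\R^M\to\R^N$. This family depends smoothly on $\vect{x}$; since $\varsigma_m(J_{\vect{x}_\star})=\kappa^{-1}>0$ and singular values are continuous, it has constant rank $m$ on a ball $B_{\epsilon'}(\vect{x}_\star)$, on which furthermore $\Norm{P_{\vect{x}}-P_{\vect{x}_\star}}\le L\delta$ and $\Norm{J_{\vect{x}}-J_{\vect{x}_\star}}\le L\delta$ with $\delta=\Norm{\vect{x}-\vect{x}_\star}$, and --- by Wedin's theorem \cite[Chapter III, Theorem 3.9]{MPT}, whose equal-rank hypothesis is exactly this constant-rank statement --- the \emph{mixed} bound $\Norm{J_{\vect{x}}^\dagger-J_{\vect{x}_\star}^\dagger}\le c_1\Norm{J_{\vect{x}}^\dagger}\,\kappa\,\Norm{J_{\vect{x}}-J_{\vect{x}_\star}}$ holds; here $\epsilon'$, $L$, $c_1$ depend only on $F$, $\vect{x}_\star$, $\Var{M}$. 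Restricting to $\delta_k\le\tfrac{1-\alpha}{c\kappa}$ (with $c\ge L$) then forces $\varsigma_m(J_{\vect{x}_k})\ge\alpha\kappa^{-1}$, i.e. $\Norm{J_{\vect{x}_k}^\dagger}\le\kappa/\alpha$; this is the origin of the factor $1/\alpha$.

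Writing $\vect{x}_k=R_{\vect{x}_\star}(\vect{v}_k)$ with $\vect{v}_k\in\Tang{\vect{x}_\star}{\Var{M}}$ (possible for $\vect{x}_k$ near $\vect{x}_\star$ by local rigidity of $R$, cf.\ \refdef{def_retraction}), \reflem{lem_retraction} gives $\vect{v}_k=\vect{x}_k-\vect{x}_\star+\mathcal{O}(\delta_k^2)$, and Taylor-expanding $t\mapsto F(R_{\vect{x}_\star}(t\vect{v}_k))$ to second order gives $F(\vect{x}_k)=F(\vect{x}_\star)+J_{\vect{x}_\star}\vect{v}_k+\mathcal{O}(\delta_k^2)$. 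Optimality at $\vect{x}_\star$ with \refeqn{eqn_gradient} gives $J_{\vect{x}_\star}^\dagger F(\vect{x}_\star)=0$. Splitting
\[
 J_{\vect{x}_k}^\dagger F(\vect{x}_k)=J_{\vect{x}_\star}^\dagger F(\vect{x}_k)+\bigl(J_{\vect{x}_k}^\dagger-J_{\vect{x}_\star}^\dagger\bigr)F(\vect{x}_\star)+\bigl(J_{\vect{x}_k}^\dagger-J_{\vect{x}_\star}^\dagger\bigr)\bigl(F(\vect{x}_k)-F(\vect{x}_\star)\bigr),
\]
one checks that the first term equals $\vect{v}_k+\mathcal{O}(\kappa\delta_k^2)$ because $J_{\vect{x}_\star}^\dagger J_{\vect{x}_\star}=P_{\vect{x}_\star}$ fixes $\vect{v}_k$; the second is $\mathcal{O}(\tfrac{\kappa^2}{\alpha}\Norm{F(\vect{x}_\star)}\delta_k)$ by the mixed bound; and, using the same bound together with the Taylor expansion of $F(\vect{x}_k)-F(\vect{x}_\star)$ and $\Norm{P_{\vect{x}_k}-P_{\vect{x}_\star}}=\mathcal{O}(\delta_k)$, the third is $\mathcal{O}(\tfrac{\kappa+1}{\alpha}\delta_k^2)$. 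Since $(\vect{x}_\star-\vect{x}_k)+\vect{v}_k=\mathcal{O}(\delta_k^2)$, this gives the linear estimate (and $\Norm{\eta_k}=\mathcal{O}(\delta_k)$, closing the loop); when $F(\vect{x}_\star)=0$ the second term vanishes and $F(\vect{x}_k)=J_{\vect{x}_\star}\vect{v}_k+\mathcal{O}(\delta_k^2)$ keeps the third $\mathcal{O}(\tfrac{\kappa+1}{\alpha}\delta_k^2)$, giving the quadratic estimate.

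Finally, absorbing the finitely many constants into a single $c$, a routine induction with $\epsilon$ as stated will show that the iterates never leave $B_{\epsilon'}(\vect{x}_\star)$ --- which is what the bound $\epsilon\le\tfrac{\alpha\epsilon'}{1+\alpha+c\kappa^2\Norm{F(\vect{x}_\star)}}$ on the starting radius buys --- and that $\tfrac{c\kappa^2\Norm{F(\vect{x}_\star)}}{\alpha}<1$ together with $\epsilon\le\tfrac{1-\alpha}{c\kappa}$ (which makes the quadratic remainder a strict fraction of $\delta_k$) yields genuine convergence at the stated rates; the quadratic case is identical with $\Norm{F(\vect{x}_\star)}=0$. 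I expect the main obstacle to be precisely the comparison of $J_{\vect{x}_k}$ and $J_{\vect{x}_\star}$ across distinct tangent spaces while keeping the \emph{sharp} power $\kappa^2$ (rather than $\kappa^3$): this is what forces the embedded-manifold setting and the mixed Wedin bound with one frozen factor $\kappa=\Norm{J_{\vect{x}_\star}^\dagger}$ and one running factor $\Norm{J_{\vect{x}_k}^\dagger}\le\kappa/\alpha$. The remainder is careful but routine Taylor bookkeeping.
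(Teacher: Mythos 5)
Your proposal follows essentially the same route as the paper's proof: linearize the retraction via \reflem{lem_retraction}, Taylor-expand $F$ using the embedding, invoke the stationarity identity $J_{\vect{x}_\star}^\dagger F(\vect{x}_\star)=0$, control $\varsigma_m(J_{\vect{x}_k})$ by Weyl's inequality to get $\Norm{J_{\vect{x}_k}^\dagger}\le\kappa/\alpha$, apply Wedin's theorem (with the equal-rank hypothesis secured by continuity of singular values) to bound $\Norm{J_{\vect{x}_k}^\dagger-J_{\vect{x}_\star}^\dagger}$ by $\mathcal{O}(\tfrac{\kappa^2}{\alpha}\delta_k)$, and close with an induction keeping the iterates in $B_{\epsilon'}(\vect{x}_\star)$. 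The only structural difference is bookkeeping: you expand $F(\vect{x}_k)$ around $\vect{x}_\star$ through the retraction curve $R_{\vect{x}_\star}(t\vect{v}_k)$, whereas the paper expands $F(\vect{x}_\star)$ around $\vect{x}_k$ in ambient coordinates (\reflem{lem_kind_of_taylor_series}), which avoids introducing $\vect{v}_k$ and the surjectivity of $R_{\vect{x}_\star}$ altogether. Your leading linear-rate constant $\mathcal{O}(\tfrac{\kappa^2}{\alpha}\Norm{F(\vect{x}_\star)}\delta_k)$ from the term $(J_{\vect{x}_k}^\dagger-J_{\vect{x}_\star}^\dagger)F(\vect{x}_\star)$ matches the paper's.

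One step is underjustified, and it is exactly the sharp-power-of-$\kappa$ issue you flag as the crux. You claim the cross term $(J_{\vect{x}_k}^\dagger-J_{\vect{x}_\star}^\dagger)\bigl(F(\vect{x}_k)-F(\vect{x}_\star)\bigr)$ is $\mathcal{O}(\tfrac{\kappa+1}{\alpha}\delta_k^2)$ \emph{using the same bound}, but the Wedin estimate applied to this term only yields $\Norm{J_{\vect{x}_k}^\dagger-J_{\vect{x}_\star}^\dagger}\cdot\Norm{F(\vect{x}_k)-F(\vect{x}_\star)}=\mathcal{O}(\tfrac{\kappa^2}{\alpha}\Norm{J_{\vect{x}_\star}}\delta_k^2)$, which spoils the stated quadratic constant $c(\kappa+1)/\alpha$ (absorbing the extra $\kappa$ into $c$ is legal only in the degenerate sense that $\kappa$ is a function of $F$ and $\vect{x}_\star$, and defeats the purpose of the theorem). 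To obtain $\kappa+1$ you must avoid Wedin on this term and instead use the identity $(J_{\vect{x}_k}^\dagger-J_{\vect{x}_\star}^\dagger)J_{\vect{x}_\star}\vect{v}_k=J_{\vect{x}_k}^\dagger(J_{\vect{x}_\star}-J_{\vect{x}_k})\vect{v}_k+(P_{\vect{x}_k}-P_{\vect{x}_\star})\vect{v}_k$, whose two pieces are $\mathcal{O}(\tfrac{\kappa}{\alpha}\delta_k^2)$ and $\mathcal{O}(\delta_k^2)$; your mention of $\Norm{P_{\vect{x}_k}-P_{\vect{x}_\star}}=\mathcal{O}(\delta_k)$ suggests you had this in mind, but it needs to be spelled out, since it is the one place where the naive estimate fails. (The paper sidesteps this entirely: its rearrangement of $J^\dagger F(\vect{x})$ never produces this cross term, yielding instead $J^\dagger\vect{v}$ and $\vect{w}$ bounded by $\tfrac{\gamma_F\kappa}{\alpha}\delta^2$ and $\gamma_I\delta^2$.) In the linear case the cross term is harmless because it is swallowed by the unspecified $\mathcal{O}(\delta_k^2)$; with the repair above, the quadratic case goes through as well and the proposal is correct.
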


\begin{remark}
The {order of convergence} may also be established from \cite[Theorem 8.2.1]{AMS2008}. However, {intrinsic} multiplicative constants are not derived there, as their analysis is founded on {coordinate expressions}
 that depend on the chosen chart; they thus only derive chart-dependent multiplicative constants.
\end{remark}

In the following let $P_A$ denote the orthogonal projection onto the linear subspace $A \subset \R^N$.  Recall the next lemma from \cite[Section 2]{BV2017}, which we need in the proof of \refthm{thm_rgn_convergence}.

\begin{lemma} \label{lem_kind_of_taylor_series}
Let $F : \Var{M} \to \R^N$ be a smooth function and $\vect{x} \in \Var{M}$. Then, there exist constants $r_F > 0$ and $\gamma_F \ge 0$ such that for all $\vect{y} \in B_{r_F}(\vect{x}) \cap \Var{M}$ we have
\(
F(\vect{y}) = F(\vect{x}) + (\deriv{F}{\vect{x}}) P_{\Tang{\vect{x}}{\Var{M}}} \Delta + \vect{v}_{\vect{x},\vect{y}},
\)
where $\Delta =$ $(\vect{y}-\vect{x}) \in \R^N$ and
\(
\|\vect{v}_{\vect{x},\vect{y}}\| \le \gamma_F \|\Delta\|^2.
\)
\end{lemma}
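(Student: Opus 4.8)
The plan is to reduce the claim to the ordinary second-order Taylor expansion of a smooth map on $\R^m$, by choosing a local parametrization of $\Var{M}$ near $\vect{x}$ that is \emph{adapted} to the tangent space $\Tang{\vect{x}}{\Var{M}}$, so that the ambient difference $\Delta$ decomposes cleanly into a tangential block and a quadratically small normal block.

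First I would fix coordinates. Since $\Var{M} \subset \R^M$ is an embedded submanifold of dimension $m$, pick an orthonormal basis of $\R^M$ whose first $m$ vectors span $\Tang{\vect{x}}{\Var{M}}$ and translate so that $\vect{x}$ becomes the origin; write a point of $\R^M$ as $(\vect{w},\vect{z})$ with $\vect{w}\in\R^m$, $\vect{z}\in\R^{M-m}$. The submanifold chart theorem then gives an open neighbourhood $V\ni\vect{0}$ in $\R^m$ and a smooth $g:V\to\R^{M-m}$ with $g(\vect{0})=\vect{0}$ such that $\psi:\vect{w}\mapsto(\vect{w},g(\vect{w}))$ parametrizes an open neighbourhood of $\vect{x}$ in $\Var{M}$. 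Since the image of $\deriv{\psi}{\vect{0}}$ equals $\Tang{\vect{x}}{\Var{M}}=\R^m\times\{\vect{0}\}$ and this image is $\{(\vect{s},(\deriv{g}{\vect{0}})\vect{s}):\vect{s}\in\R^m\}$, it follows that $\deriv{g}{\vect{0}}=0$.

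Next I would apply Taylor's theorem to $h:=F\circ\psi:V\to\R^N$. After shrinking $V$ to a closed ball on which the second derivatives of $h$ are bounded, there is $\gamma_F\ge 0$ with $\Norm{h(\vect{w})-h(\vect{0})-(\deriv{h}{\vect{0}})\vect{w}}\le\gamma_F\Norm{\vect{w}}^2$ on that ball. Here $h(\vect{0})=F(\vect{x})$, and by the chain rule together with $\deriv{g}{\vect{0}}=0$ we get $(\deriv{h}{\vect{0}})\vect{w}=(\deriv{F}{\vect{x}})(\vect{w},\vect{0})$. For $\vect{y}=\psi(\vect{w})\in\Var{M}$ near $\vect{x}$ one has $\Delta=\vect{y}-\vect{x}=(\vect{w},g(\vect{w}))$, so in these orthonormal coordinates $P_{\Tang{\vect{x}}{\Var{M}}}\Delta=(\vect{w},\vect{0})$ and hence $(\deriv{F}{\vect{x}})P_{\Tang{\vect{x}}{\Var{M}}}\Delta=(\deriv{h}{\vect{0}})\vect{w}$. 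Putting $\vect{v}_{\vect{x},\vect{y}}:=h(\vect{w})-h(\vect{0})-(\deriv{h}{\vect{0}})\vect{w}$ then yields $F(\vect{y})=F(\vect{x})+(\deriv{F}{\vect{x}})P_{\Tang{\vect{x}}{\Var{M}}}\Delta+\vect{v}_{\vect{x},\vect{y}}$ with $\Norm{\vect{v}_{\vect{x},\vect{y}}}\le\gamma_F\Norm{\vect{w}}^2\le\gamma_F\Norm{\Delta}^2$, the last inequality because $\vect{w}$ is a sub-block of $\Delta$ in an orthonormal frame. Finally I would pick $r_F>0$ small enough that $B_{r_F}(\vect{x})\cap\Var{M}$ is contained in $\psi$ of that ball, which is possible since $\psi(V)$ is open in $\Var{M}$ and the topology of $\Var{M}$ is induced by the ambient Euclidean distance.

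The only genuinely delicate point is the uniformity bookkeeping: one must check that a single $r_F$ works simultaneously for all $\vect{y}\in B_{r_F}(\vect{x})\cap\Var{M}$, i.e. that the quadratic Taylor remainder bound for $h$ holds with one constant over a fixed compact subneighbourhood; this is routine from continuity of the second derivatives of $h$. As an alternative organization of essentially the same computation, one could extend $F$ to a smooth map on an ambient neighbourhood of $\vect{x}$, Taylor-expand the extension, and absorb the normal component $(\id-P_{\Tang{\vect{x}}{\Var{M}}})\Delta$ into the remainder, using that the graph representation above gives $\Norm{(\id-P_{\Tang{\vect{x}}{\Var{M}}})\Delta}=\Norm{g(\vect{w})}=\mathcal{O}(\Norm{\Delta}^2)$ precisely because $g(\vect{0})=\vect{0}$ and $\deriv{g}{\vect{0}}=0$.
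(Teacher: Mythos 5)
Your proof is correct. Note, however, that the paper does not prove this lemma at all: it simply recalls it from \cite[Section 2]{BV2017}, so there is no in-paper argument to compare against. Your self-contained graph-coordinate proof is a clean way to supply the missing details. The two essential points are exactly the ones you isolate: (i) writing $\Var{M}$ locally as the graph $\vect{w}\mapsto(\vect{w},g(\vect{w}))$ over the tangent plane forces $\deriv{g}{\vect{0}}=0$, which is what lets you replace $\deriv{h}{\vect{0}}\vect{w}$ by $(\deriv{F}{\vect{x}})P_{\Tang{\vect{x}}{\Var{M}}}\Delta$ and simultaneously shows that the normal part of $\Delta$ is $\mathcal{O}(\Norm{\Delta}^2)$; and (ii) the fact that $\Var{M}$ is \emph{embedded} (not merely immersed) guarantees that a small ambient ball $B_{r_F}(\vect{x})$ meets $\Var{M}$ only inside the chart, so a single compactness argument yields one uniform $\gamma_F$. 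Your closing remark about the alternative organization via an ambient smooth extension of $F$ is also valid and is essentially equivalent, since the normal component is absorbed into the remainder by the same estimate $\Norm{g(\vect{w})}=\mathcal{O}(\Norm{\vect{w}}^2)$. The only cosmetic caveat is that $\gamma_F$ as produced by your Taylor bound controls $\Norm{\vect{v}_{\vect{x},\vect{y}}}$ via $\Norm{\vect{w}}^2\le\Norm{\Delta}^2$, which is the right direction of the inequality, so nothing further is needed.
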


We can now prove \refthm{thm_rgn_convergence}.

\begin{proof}[Proof of \refthm{thm_rgn_convergence}]
We begin with some general considerations: In \reflem{lem_retraction} we choose $\delta$ small enough such that it applies to \emph{all}  $\vect{x}\in B_{\delta}(\vect{x}_\star) \cap \Var{M}$. Let $0<\epsilon'\leq\delta$. Then, there exists a constant $\gamma_R>0$ depending on the retraction operator $R$, such that for all $\vect{x}\in B_{\epsilon'}(\vect{x}_\star) \cap \Var{M}$ we have
\begin{equation}
\label{e0} \| R_{\vect{x}}(\eta) - (\vect{x} + \eta) \| \le \gamma_R \| \eta \|^2  \;\text{ for every } \eta \in B_{\epsilon'}(0) \cap \Tang{\vect{x}}{\Var{M}}.
\end{equation}
By applying \reflem{lem_kind_of_taylor_series} to the smooth functions $F$ and $\id_{\Var{M}}$ respectively and using the smoothness of (the derivative of) $F$, we see that there exist constants  $\gamma_F, \gamma_I >0$ so that for all $\vect{x} \in B_{\epsilon'}(\vect{x}_\star) \cap \Var{M}$ we have
\begin{align}
\label{e1} F(\vect{x}_\star) - F(\vect{x}) - (\deriv{F}{\vect{x}}) P_{\Tang{\vect{x}}{\Var{M}}} (\vect{x}_\star- \vect{x}) &= \vect{v} \text{ with } \|\vect{v} \| \le \gamma_F \| \vect{x}_\star -  \vect{x}\|^2,\\
\label{e2}  \vect{x}_\star -  \vect{x} - P_{\Tang{\vect{x}}{\Var{M}}}(  \vect{x}_\star-\vect{x}) &= \vect{w} \text{ with } \|\vect{w} \| \le \gamma_I \| \vect{x}_\star -  \vect{x}\|^2.
\end{align}
Moreover, we define the Lipschitz constant
\begin{align}\label{def_C}
C &:= \max\limits_{\vect{x}\in B_{\epsilon'}(\vect{x}_\star) \cap \Var{M}}\, \frac{\Norm{\deriv{F}{\vect{x}_\star} \circ  P_{\Tang{\vect{x}_\star}{\Var{M}}} - \deriv{F}{\vect{x}} \circ P_{\Tang{\vect{x}}{\Var{M}}} }}{\Norm{\vect{x}_\star-\vect{x}}}.
\end{align}
We choose a constant $c$, depending on $R,F$ and $\epsilon'$, that satisfies
\begin{align} \label{eqn_crazy_bound}
  \epsilon \leq \min\Bigl\{ \frac{1}{\kappa \gamma_F}, \frac{1-\alpha}{ C \kappa}, \Big( 1 + \frac{1+\tfrac{1}{2}(1+\sqrt{5}) C \kappa^2  \|F(\vect{x}_\star) \|}{\alpha} \Big)^{-1} \epsilon'\Bigr\}
\text{ and }
  c\geq \max \{ \tfrac{1}{2}(1+\sqrt{5})C, \gamma_F,\gamma_R,\gamma_I \}.
  \end{align}

The rest of the proof is by induction. Suppose that the RGN method applied to starting point $\vect x_0\in\Var M$ generated the sequence of points $\vect x_0,\ldots, \vect{x}_k \in \Var{M}$. First, we show that~$\deriv{F}{\vect x_k}$ is injective, so that the update direction $\eta = -(\deriv{F}{\vect x_k})^\dagger F(\vect{x_k})$, and, hence, $\vect{x}_{k+1}$ is defined. Thereafter, we prove the asserted bounds on $\Vert \vect x_k -  \vect{x}_{k+1}\Vert$.
For avoiding subscripts, let $\vect{x} := \vect{x}_k \in \Var{M}$ and $\vect{y} := \vect{x}_{k+1} = R_{\vect{x}}( -(\deriv{F}{\vect{x}})^\dagger F(\vect{x}) ) \in \Var{M}$.

{By induction,} we can assume $\Vert \vect x_k - \vect x_\star\Vert \leq \Vert \vect x_0 - \vect x_\star\Vert$; {indeed the base case $k=0$ is trivially true, and we will prove that it also holds for $k+1$ at the end of this proof, completing the induction.}
Hence,
% \begin{equation}\label{x_epsilon}
\(
\vect x\in B_{\epsilon'}(\vect x_\star)\cap \Var M.
\)
% \end{equation}
Let $J \in \R^{N \times M}$ denote the matrix of $\deriv{F}{\vect{x}}$ with respect to the standard bases on $\R^M$ and $\R^N$. Let $J = U \Sigma V^T$ be its (compact) singular value decomposition (SVD), where $U \in \R^{N \times m}$ and $V \in \R^{M \times m}$ have orthonormal columns, the columns of $V$ span $\Tang{\vect{x}}{\Var{M}}$ and $\Sigma\in\R^{m \times m}$ is diagonal matrix containing the singular values. Then, the matrix of $(\deriv{F}{\vect{x}})^\dagger$ with respect to the standard bases is $J^\dagger$, i.e., the Moore--Penrose pseudoinverse of $J$, and $\kappa(\vect{x})=\Vert J^\dagger \Vert$. Similarly, let $J_{\star} \in \R^{N \times M}$ denote the matrix of $\deriv{F}{\vect{x}_\star}$, and let $U_\star \Sigma_\star V_\star^T$ be its SVD.

By assumption, $\deriv{F}{\vect x_\star}$ is injective and thus we have $\kappa^{-1}=\varsigma_{\min}(\deriv{F}{\vect{x}_\star})=\varsigma_{\min}(J_{\star}) > 0.$ The matrix of $P_{\Tang{\vect{x}}{\Var{M}}}$ is $VV^T$, and similarly for $P_{\Tang{\vect{x}_\star}{\Var{M}}}$.
Then, by the definition of $C$ in \refeqn{def_C}, we have
\begin{equation}\label{eqn_stuff_deriv_bound}
 \| J_\star - J \| = \| J_\star (V_\star V_\star^T) - J (V V^T) \| \le C \|  \vect{x}_\star -  \vect{x} \|,
\end{equation}
and hence $\| J_\star - J \|\le C\epsilon \le C \tfrac{1-\alpha}{c \kappa} \le (1-\alpha) \varsigma_{\min}(J_\star)$, because $\vect{x} \in B_{\epsilon}(\vect{x}_\star)$ and the definition of $c$.
From Weyl's perturbation Lemma it follows that
$ | \varsigma_{\min}(J_\star) - \varsigma_{\min}(J) | \le \| J_\star - J \| \le (1-\alpha) \varsigma_{\min}(J_\star).$ We obtain $\varsigma_{\min}(J) > \alpha \varsigma_{\min}(J_\star) > 0$, where the last inequality is by the assumption $\alpha>0$. It follows that
\begin{align} \label{eqn_stuff_cond_bound}
\|J^\dagger\| = \bigl( \varsigma_{\min}(J) \bigr)^{-1} < \kappa \alpha^{-1} < \infty,
\end{align}
so that $\deriv{F}{\vect{x}}$ is indeed injective. This shows that the RGN update direction $\eta$ is well defined.

It remains to prove the bound on $\Norm{\vect{x}_\star-\vect{y}}$.
First we show that $\|\eta\| = \|-J^\dagger F(\vect{x})\| \le \epsilon' < \delta$, so that the retraction would satisfy \refeqn{e0}.
By assumption $\vect{x}_\star$ is a local minimum of \refeqn{least_squares}, so that from \refeqn{eqn_gradient} we obtain $0 = \nabla_{\vect{x}_\star} f = J_\star^T F(\vect{x}_\star) =  V_\star \Sigma_\star U_\star^T F(\vect{x}_\star)$. By \cite[Chapter III, Theorem 1.2 (9)]{MPT} and the assumption that $\deriv{F}{\vect{x}_\star}$ is injective, we have $J_\star^\dagger = V_\star \Sigma_\star^{-1} U_\star^T$ from which we conclude $J_\star^\dagger  F(\vect{x}_\star)=0$.
Let $P = P_{\Tang{\vect{x}}{\Var{M}}}$. From \refeqn{e1},
\begin{equation}\label{some_of_the_last}
J^\dagger F(\vect{x}) = J^\dagger F(\vect{x}_\star)   -   P ( \vect{x}_\star -  \vect{x}) - J^\dagger  \vect{v},
\end{equation}
so that
\begin{align}
\Norm{\eta} = \| -(J^\dagger-J_\star^\dagger) F(\vect{x}_\star)   +   P ( \vect{x}_\star -  \vect{x}) + J^\dagger  \vect{v}\|
\leq \| J^\dagger-J_\star^\dagger\| \|F(\vect{x}_\star) \|  +   \|P\| \| \vect{x}_\star -  \vect{x}\| + \|J^\dagger\|  \|\vect{v}\|.
\label{eqn_stuff_bounds9}
\end{align}
From Wedin's theorem \cite[Chapter III, Theorem 3.9]{MPT} we obtain
	\begin{equation}\label{wedin_appl}
	\| J^\dagger-J_\star^\dagger\| \leq \frac{1+\sqrt{5}}{2} \, \| J^\dagger\| \|J_\star^\dagger\| \| J-J_\star\| \leq \frac{(1+\sqrt{5}) \, C\kappa^2}{2 \alpha}  \|  \vect{x}_\star -  \vect{x} \|,
	\end{equation}
where the last step is because of \refeqn{eqn_stuff_deriv_bound} and \refeqn{eqn_stuff_cond_bound}.
Using $\|P\|=1$ for orthogonal projectors, the assumption $\| \vect{x}_\star - \vect{x} \| \le (\kappa \gamma_F)^{-1}$, \refeqn{eqn_stuff_cond_bound}, \refeqn{wedin_appl}, and the bound on $\Norm{\vect{v}}$ in \refeqn{e1}, it follows from \refeqn{eqn_stuff_bounds9} that
\begin{equation}\label{zeta_2_bound1}
\Norm{\eta} \le \Big( 1 + \frac{\kappa\gamma_F\Norm{\vect{x}_\star-\vect{x}}+\tfrac{1}{2}(1+\sqrt{5}) C \kappa^2  \|F(\vect{x}_\star) \|}{\alpha} \Big) \| \vect{x}_\star -  \vect{x}\|.
\end{equation}
By the definition of $\epsilon$ and the assumption $\Norm{\vect{x}_\star-\vect{x}}<\epsilon$, we have \( \Norm{\vect{x}_\star-\vect{x}}<\epsilon < \frac{1}{\kappa \gamma_F},\)
so that by \refeqn{zeta_2_bound1},
\begin{equation}
  \label{zeta_2_bound}
\Norm{\eta} \le \Big( 1 + \frac{1+\tfrac{1}{2}(1+\sqrt{5}) C \kappa^2  \|F(\vect{x}_\star) \|}{\alpha} \Big) \| \vect{x}_\star -  \vect{x}\|.
\end{equation}
Using the third bound on $\epsilon$ in \refeqn{eqn_crazy_bound}, we have
$$
\Norm{\vect{x}_\star-\vect{x}}<\epsilon <\Big( 1 + \frac{1+\tfrac{1}{2}(1+\sqrt{5}) C \kappa^2  \|F(\vect{x}_\star) \|}{\alpha} \Big)^{-1}\epsilon',
$$
which when plugged into \refeqn{zeta_2_bound} yields $\Norm{\eta}<\epsilon'$.

From the foregoing discussion, we conclude that \refeqn{e0} applies to $R_{\vect{x}}(\eta)=R_{\vect{x}}( - J^\dagger F(\vect{x}) )$, so that
\begin{equation}\label{eqn_stuff_bounds5}
 \|  \vect{y} -  \vect{x}_\star \| = \| R_{\vect{x}}( - J^\dagger F(\vect{x}) ) -  \vect{x}_\star \| \le \|  \vect{x} - J^\dagger F(\vect{x}) -  \vect{x}_\star \| + \gamma_R \| \eta \|^2.
\end{equation}
Let $\zeta := \|  \vect{x} - J^\dagger F(\vect{x}) -  \vect{x}_\star \|$.
We use $ J_\star^\dagger F(\vect{x}_\star)=0$ and the formula from \refeqn{some_of_the_last} to derive that
\begin{align*}
\zeta=\|\vect{x} -  \vect{x}_\star - (J^\dagger F(\vect{x})- J_\star^\dagger F(\vect{x}_\star))\|
&= \|\vect{x} -  \vect{x}_\star- (J^\dagger-J_\star^\dagger) F(\vect{x}_\star)   +   P ( \vect{x}_\star -  \vect{x}) + J^\dagger  \vect{v}\|\\
&=  \|- (J^\dagger-J_\star^\dagger) F(\vect{x}_\star)   - \vect{w} + J^\dagger  \vect{v}\|\\
&\le \| J^\dagger-J_\star^\dagger\| \|F(\vect{x}_\star)\|   +    \gamma_I \| \vect{x}_\star -  \vect{x}\|^2 + \| J^\dagger\|   \gamma_F \| \vect{x}_\star -  \vect{x}\|^2,
\end{align*}
where the second-to-last equality is due to \refeqn{e2}, and in the last line we have used the triangle inequality and the bounds on $\Norm{\vect{v}}$ and $\Norm{\vect{w}}$ from \refeqn{e1} and \refeqn{e2}.
Combining this with \refeqn{wedin_appl} and \refeqn{eqn_stuff_cond_bound} yields
\begin{align}
\label{eqn_stuff_bounds8}\zeta&\leq \frac{\tfrac{1}{2}(1+\sqrt{5}) \,C \kappa^2\, \|F(\vect{x}_\star)\|}{\alpha}  \|  \vect{x}_\star -  \vect{x} \|    +    \Big(\gamma_I  + \frac{\gamma_F\kappa}{\alpha}   \Big) \| \vect{x}_\star -  \vect{x}\|^2,
\end{align}
Note that we have chosen the constant $c$ large enough, so that $\tfrac{1}{2}(1+\sqrt{5}) \,C  < c$.
Plugging \refeqn{eqn_stuff_bounds8} and \refeqn{zeta_2_bound} into \refeqn{eqn_stuff_bounds5} yields the first bound.

For the second assertion we have the additional assumption that $\vect{x}_\star$ is a zero of the objective function $f(\vect{x})=\tfrac{1}{2}\Norm{F(\vect{x})}^2$. From \refeqn{eqn_stuff_bounds8} we obtain $\zeta\le \big(\frac{\kappa\gamma_F}{\alpha} + \gamma_I\big) \|  \vect{x}_\star -  \vect{x} \|^2.$ From \refeqn{eqn_stuff_bounds9} we get $\Norm{\eta} = \|  P ( \vect{x}_\star -  \vect{x}) + J^\dagger  \vect{v}\|$ so that we can bound $\Norm{\eta}^2$ by
\begin{align*}
\|P( \vect{x}_\star -  \vect{x})\|^2 + 2|\langle P( \vect{x}_\star- \vect{x}), J^\dagger \vect{v} \rangle| + \|J^\dagger \vect{v}\|^2
\le \| \vect{x}_\star -  \vect{x}\|^2 + 2 \gamma_F \|J^\dagger\| \| \vect{x}_\star -  \vect{x}\|^3 + \gamma_F^2 \|J^\dagger\|^2 \| \vect{x}_\star -  \vect{x}\|^4,
\end{align*}
where the inequality is by the Cauchy--Schwartz inequality and the fact that $\|P\| = 1$ for orthogonal projectors. As before, plugging these bounds for $\zeta$ and $\Norm{\eta}$ into \refeqn{eqn_stuff_bounds5} and exploiting that $c\geq \max\set{\gamma_F,\gamma_I,\gamma_R}$, the second bound is obtained.
\end{proof}

A reviewer asked how critical the injectivity assumption on the derivative $\deriv{F}{\vect{x}}$ in the above theorem is. The brief answer is that it is usually a very weak assumption in practice. First, we need a lemma.

\begin{lemma}\label{lemma_explain_injectivity}
{Let $\Var M\subset \R^M$ be an embedded manifold whose projectivization is a smooth projective variety, and let
and $\Phi:\Var M\to \R^N$ be a regular map.} Let $\Var{N}$ denote the $\R$-variety that is the Zariski closure of the image $\Phi(\Var{M})$. If the dimension are $\dim \Var M = \dim\Var{N}$, then the locus of points $\Var{G} := \{ \vect{x} \in \Var{M} \;|\; \deriv{\Phi}{\vect{x}} \text{ is injective and } \Phi(\vect{x}) \text{ is a smooth point of } \Var{N} \}$ is a dense subset of $\Var{M}$ in the Euclidean topology.
\end{lemma}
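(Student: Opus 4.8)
The plan is to reduce the statement to the algebraic geometry of the rational map underlying $\Phi$: I will show that $\deriv{\Phi}{\vect x}$ is injective and $\Phi(\vect x)$ is a smooth point of $\Var N$ off a proper Zariski-closed subset of (the Zariski closure of) $\Var M$, and then upgrade this Zariski-genericity to Euclidean density using that $\Var M$ is a smooth manifold.

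First I would pass to the Zariski closure. Let $V\subset\R^M$ be the Zariski closure of $\Var M$; by the hypothesis on its projectivization, $V$ is a smooth irreducible $\R$-variety, and since $\dim_\R\Var M=\dim V=:m$, every connected component of $\Var M$ is a smooth $m$-dimensional submanifold that is Zariski dense in $V$. Because $\Phi$ is regular it is the restriction to $\Var M$ of a rational map $\Phi\colon V\dashrightarrow\R^N$ defined on a dense Zariski-open set $V'\supseteq\Var M$. The image $\overline{\Phi(V')}$ is irreducible of dimension $\le m$; since it contains $\Var N=\overline{\Phi(\Var M)}$, which by assumption has dimension $m$, it must have a unique $m$-dimensional irreducible component, and this forces $\Var N=\overline{\Phi(V')}$. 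Hence $\Var N$ is irreducible of dimension $m$ and $\Phi\colon V\dashrightarrow\Var N$ is dominant.

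Now the algebraic core. Let $\Var{N}_{\mathrm{sm}}\subseteq\Var N$ be the (dense Zariski-open) smooth locus, and put $V_1:=\Phi^{-1}(\Var{N}_{\mathrm{sm}})\subseteq V'$. Dominance makes $V_1$ nonempty, hence a dense Zariski-open subset of $V$, and $\Phi|_{V_1}\colon V_1\to\Var{N}_{\mathrm{sm}}$ is a dominant morphism of smooth irreducible $\R$-varieties of equal dimension $m$; in particular it is generically finite. Since $\operatorname{char}\R=0$, the associated extension of function fields is separable, so $\Phi$ is étale---and in particular unramified---over a dense Zariski-open set $\Var U\subseteq V_1$. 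For every real point $\vect x\in\Var U$ the scheme-theoretic differential of $\Phi$ at $\vect x$ is injective; at smooth real points of $V$ this coincides with the differential $\deriv{\Phi}{\vect x}\colon\Tang{\vect x}{\Var M}\to\R^N$ of $\Phi$ as a map of manifolds, and $\Phi(\vect x)\in\Var{N}_{\mathrm{sm}}$ is a smooth point of $\Var N$. Thus $\Var U\cap\Var M\subseteq\Var G$. To conclude, note that $V\setminus\Var U$ lies in the zero set of a regular function $g$ on $V$ with $g\not\equiv 0$; on each connected component $\Var M^{(i)}$ of $\Var M$ the restriction $g|_{\Var M^{(i)}}$ is a real-analytic function that is not identically zero (as $\Var M^{(i)}$ is Zariski dense in $V$), so $\{g=0\}\cap\Var M^{(i)}$ has empty interior in $\Var M^{(i)}$. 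Hence $\Var M\setminus\{g=0\}$, and a fortiori $\Var G\supseteq\Var U\cap\Var M$, is Euclidean-dense in $\Var M$.

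The step I expect to require the most care is the bookkeeping of irreducibility and dimension at the start: one must be sure that $\Var N$ is genuinely irreducible (so that generic smoothness/finiteness applies to an irreducible target) and that $\dim V=\dim_\R\Var M$, both of which rest on the equal-dimension hypothesis and on $\Var M$ being a smooth manifold Zariski dense in the smooth variety $V$. Everything else is a routine invocation of generic smoothness in characteristic zero, together with the elementary fact that the zero set of a nonzero real-analytic function on a connected manifold has empty interior.
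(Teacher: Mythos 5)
Your proof is correct, but it takes a genuinely different (and considerably more detailed) route than the paper, whose entire proof is the one-line remark that the lemma ``is essentially a restatement of [Harris, Theorem~11.12]'', i.e.\ the theorem on fiber dimension, which yields that the locus where the fiber of $\Phi$ is positive-dimensional is a proper Zariski-closed subset. Your argument instead goes through generic smoothness in characteristic zero: after identifying $\Var{N}$ with the closure of the image of the rational extension of $\Phi$ to $V=\overline{\Var M}$, you invoke separability of the function-field extension to get an \'etale locus $\Var U$, and then upgrade Zariski density of $\Var U$ to Euclidean density of $\Var U\cap\Var M$ via the identity theorem for real-analytic functions. This buys two things the paper's citation glosses over: (i) Harris's theorem by itself only gives generic \emph{finiteness} of the fibers, which in characteristic zero implies but is not literally the same as injectivity of the differential --- your generic-smoothness step supplies exactly the missing link; and (ii) the passage from ``complement of a proper Zariski-closed set'' to ``Euclidean-dense in the real manifold $\Var M$'' is nontrivial over $\R$ and you justify it explicitly, using that each component of $\Var M$ is Zariski dense in the irreducible $V$. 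Two small imprecisions, neither fatal: $V$ is the affine cone over the smooth projectivization and so may be singular at the vertex, so you should say $V$ is smooth \emph{along $\Var M$} rather than smooth outright (your argument only uses smoothness at real points of $\Var M$, so nothing breaks); and your reliance on irreducibility of $V$ is a genuine hypothesis hidden in the word ``variety'' --- without it the statement can fail (a component mapping to a lower-dimensional image would miss $\Var G$ entirely), so you are right to flag it as the delicate point.
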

\begin{proof}
{This is essentially a restatement of \cite[Theorem 11.12]{Harris1992}.}
% Let $\vect{x}\in \Var{M}$. Then,
% $\deriv{\Phi}{\vect{x}} : \Tang{\vect x}{\Var{M}}  \to \Tang{\Phi(\vect{x})}{\Var{N}}$,
% where the tangent space on the latter is the Zariski tangent space \cite{Harris1992}. It follows from a standard result, e.g., \cite[Theorem 11.12]{Harris1992}, that if the (affine) dimensions satisfy $\dim \Var{M} =\dim \Var{N}$, then $\deriv{\Phi}{\vect{x}}$ is not injective at most on a Zariski-closed subset of $\Var{M}$, because the generic fiber is $0$-dimensional. Moreover, by generic smoothness, the singular locus of a variety is always closed \cite[p.~175--176]{Harris1992}. This entails the claim.
\end{proof}

{
The following proposition shows, under the assumptions of \reflem{lemma_explain_injectivity}, that the local optimizer $\vect x_\star$ in \refthm{thm_rgn_convergence} has an injective derivative $\deriv{F}{\vect x_\star}=\deriv{\Phi}{\vect x_\star}$ on a set of inputs (in $\R^N$) of positive measure.}

\begin{proposition}\label{prop_measure}
Let $\Var{M}$, $\Var G$ $\Var{N}$, and $\Phi$ be as in \reflem{lemma_explain_injectivity}. Assume that we have the equality $\dim \Var{M} =\dim \Var N$. Let
\(
 \Var{B} := \bigl\{ \vect{y} \in \R^N \;\big|\; \arg\min_{\vect{x}\in\Var{M}}\|\Phi(\vect{x}) - \vect{y}\| \subset \Var{G} \bigr\}
\)
be the set of points $\vect{y}$ all of whose closest approximations on $\Phi(\Var{M})$, i.e., $C_{\vect{y}} := \arg\min_{\vect{x}\in\Var{M}}\|\Phi(\vect{x}) - \vect{y}\|$, lie in $\Var{G}$. Then, $\Var{B}$ has positive Lebesgue measure, i.e., it is open in the Euclidean topology. Moreover, $\Var{N}\cap\Var{B}$ is Euclidean dense in $\Phi(\Var{M})$.
\end{proposition}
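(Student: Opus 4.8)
To prove \refprop{prop_measure} the plan is to split the statement into (i) openness of $\Var{B}$ and (ii) Euclidean density of $\Var{N}\cap\Var{B}$ in $\Phi(\Var{M})$; since (ii) in particular makes $\Var{B}$ nonempty, (i) and (ii) together give the positive Lebesgue measure. The preliminary observation is that the two conditions cutting out $\Var{G}$ inside $\Var{M}$ --- injectivity of $\deriv{\Phi}{\vect{x}}$ (a maximal-rank condition on the Jacobian) and $\Phi(\vect{x})$ being a smooth point of $\Var{N}$ (the preimage under the regular map $\Phi$ of the smooth locus of $\Var{N}$) --- are Zariski-open, so $\Var{G}^c:=\Var{M}\setminus\Var{G}$ is Zariski-closed, and by \reflem{lemma_explain_injectivity} it is \emph{proper}, hence $\dim\Var{G}^c<\dim\Var{M}=\dim\Var{N}=:m$. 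Consequently $\Phi(\Var{G}^c)$ is a semialgebraic set of dimension below $m$, so its Euclidean closure $W:=\overline{\Phi(\Var{G}^c)}$ is closed, semialgebraic, and of dimension below $m$, while $Z:=\overline{\Phi(\Var{M})}$ is closed with $\dim Z=m$, and $\Phi(\Var{G}^c)\subseteq W\subseteq Z$.

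Next I would reformulate $\Var{B}$ in terms of closest points on the \emph{closed} set $Z$. Put $\widehat C_{\vect{y}}:=\arg\min_{\vect{z}\in Z}\Norm{\vect{z}-\vect{y}}$, which is nonempty and compact. Using $\dist(\vect{y},\Phi(\Var{M}))=\dist(\vect{y},Z)$ one checks the routine facts that the minimizers of $\Norm{\Phi(\cdot)-\vect{y}}$ on $\Var{M}$ are exactly $\Phi^{-1}(\widehat C_{\vect{y}}\cap\Phi(\Var{M}))$ and that $C_{\vect{y}}\not\subseteq\Var{G}$ holds precisely when $\widehat C_{\vect{y}}\cap\Phi(\Var{G}^c)\neq\emptyset$; hence $\Var{B}^c=\{\vect{y}\in\R^N\mid\widehat C_{\vect{y}}\cap\Phi(\Var{G}^c)\neq\emptyset\}$. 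To show $\Var{B}^c$ is closed, take $\vect{y}_n\to\vect{y}$ in $\Var{B}^c$ with witnessing minimizers $\vect{x}_n^\star\in\Var{G}^c$; the images $\Phi(\vect{x}_n^\star)$ are bounded, so along a subsequence $\Phi(\vect{x}_n^\star)\to\vect{z}$ with $\Norm{\vect{z}-\vect{y}}=\dist(\vect{y},Z)$, i.e.\ $\vect{z}\in\widehat C_{\vect{y}}\cap W$. Here I would invoke the hypothesis that the projectivization of $\Var{M}$ is a \emph{compact} projective variety: $\Pj\Var{G}^c$ is then compact, and together with the homogeneity/regularity of $\Phi$ this prevents $(\vect{x}_n^\star)$ from escaping to infinity or to the vertex of the cone, so that a further subsequence converges to some $\vect{x}^\star\in\Var{G}^c$ with $\Phi(\vect{x}^\star)=\vect{z}$; then $\vect{x}^\star$ is a genuine minimizer in $\Var{G}^c$, so $\vect{y}\in\Var{B}^c$.

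For the density claim I would set $\Var{U}$ equal to the intersection of the smooth locus of $\Var{N}$ with the complement of $W$; since $\dim W<m$, $W$ has empty interior in $\Var{N}$ and $\Var{U}$ is a Euclidean-dense open subset of $\Var{N}$. If $\vect{y}\in\Var{U}\cap\Phi(\Var{M})$ then $C_{\vect{y}}=\Phi^{-1}(\vect{y})$, and $\vect{y}\notin W\supseteq\Phi(\Var{G}^c)$ forces $\Phi^{-1}(\vect{y})\subseteq\Var{G}$, so $\vect{y}\in\Var{N}\cap\Var{B}$. Since $\Phi(\Var{M})$ is constructible and Zariski-dense in $\Var{N}$ it contains a dense Zariski-open subset of $\Var{N}$, and it follows that $\Var{U}\cap\Phi(\Var{M})$ is Euclidean dense in $\Var{N}$, a fortiori in $\Phi(\Var{M})$; this proves (ii) and, in particular, $\Var{B}\neq\emptyset$.

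The step I expect to be the genuine obstacle is the upgrade, in the openness argument, of the limiting ``virtual'' closest point $\vect{z}\in W=\overline{\Phi(\Var{G}^c)}$ to an actual bad minimizer $\vect{x}^\star\in\Var{G}^c$: a priori $\vect{z}$ need only lie in $\overline{\Phi(\Var{M})}$ and not in $\Phi(\Var{M})$ itself, in which case $\vect{y}$ would have no minimizer at all and would lie in $\Var{B}$. Excluding this is exactly what compactness of the projectivization buys us --- a strictly stronger input than smoothness of $\Var{M}$ --- and some care is needed because $\Phi$ is defined only on the affine cone $\Var{M}$, not on $\Pj\Var{M}$.
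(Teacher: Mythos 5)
Your density argument (part (ii)) is sound and runs parallel to the paper's: the paper also isolates a proper Zariski-closed bad locus (the positive-dimensional-fiber locus $\Var{W}_1$ together with the preimage of the singular locus) and invokes \cite[Theorem 11.12]{Harris1992} to conclude that its complement, and hence its image, is dense. The problem is part (i). Since $\Var{N}$ is Lebesgue-null in $\R^N$, the positive-measure claim must come entirely from your openness argument, and that argument has a genuine gap exactly at the step you flag. You need to upgrade the limit $\vect{z}\in\overline{\Phi(\Var{G}^c)}\cap\widehat C_{\vect{y}}$ to an honest minimizer $\vect{x}^\star\in\Var{G}^c$, i.e.\ you need $\Phi|_{\Var{G}^c}$ to be proper. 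Compactness of $\Pj\Var{M}$ does not buy this: the locus $\Var{W}_1$ of points with positive-dimensional fibers is \emph{contained} in $\Var{G}^c$ (a positive-dimensional fiber forces a kernel vector of $\deriv{\Phi}{\vect{x}}$), and such fibers are unbounded closed subsets of the affine cone. So the witnesses $\vect{x}_n^\star$ can escape to infinity while $\Phi(\vect{x}_n^\star)$ stays bounded, and the limit $\vect{z}$ need not lie in $\Phi(\Var{G}^c)$, nor even in $\Phi(\Var{M})$. In the paper's own motivating example ($\Phi$ the sum of $r$ rank-one tensors), $\Phi^{-1}(0)$ already contains the unbounded set $\{(\vect{a},-\vect{a})\}\subset\Var{G}^c$. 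Moreover, if $\vect{z}\notin\Phi(\Var{M})$ it can happen that $C_{\vect{y}}$ is empty or consists only of points of $\Var{G}$, so $\vect{y}\in\Var{B}$ after all; hence $\Var{B}^c$ closed is not just unproven by your argument, it is doubtful as stated. You are trying to prove something strictly stronger than what the paper establishes.

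The paper avoids this global properness issue by arguing locally. For a fixed good point $\vect{x}\notin\Var{W}:=\Var{W}_1\cup\overline{\Var{W}_2}$, the injectivity of $\deriv{\Phi}{\vect{x}}$ gives a local diffeomorphism onto a patch $\Var{N}_0=\Phi(\Var{M}_0)$ of the smooth locus of $\Var{N}$; one then takes a tubular neighborhood of $\Var{N}_0$ of positive height $h$ avoiding $\Var{N}\setminus\Var{N}_0$ and shows, via the triangle inequality, that every point of the ball $B_{\delta/2}(\Phi(\vect{x}))$ with $\delta<h$ has \emph{all} of its closest points on $\Var{N}$ inside $\Var{N}_0\subset\Phi(\Var{G})$. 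This exhibits explicit open balls contained in $\Var{B}$ (which is all that ``positive Lebesgue measure'' requires) without ever claiming that $\Var{B}$ itself is open or that limits of bad minimizers are bad minimizers. If you want to salvage your write-up, replace the global closedness argument for $\Var{B}^c$ by such a local tubular-neighborhood construction around points of $\Phi(\Var{M}\setminus\Var{W})$.
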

\begin{proof}
Let $\Var{W}_1 \subset \Var{M}$ be the locus where the dimension of the fiber $\Phi^{-1}(\Phi(\vect{x}))$ is strictly positive, i.e., $\Var W_1=\{\vect x \in \Var M \mid \dim \Phi^{-1}(\Phi(\vect{x})) >0\}$. By \cite[Theorem 11.12]{Harris1992} $\Var W_1$ is a Zariski-closed set. The last claim of the proposition is also a corollary of this theorem and the assumption that the generic fiber is $0$-dimensional.

It remains to show the first claim. Let $\Var{W}_2 \subset \Var{M}$ be the {Zariski-closed subset} of points $\vect{x} \in \Var{M}$ for which $\Phi(\vect{x})$ lies in the singular locus of $\Var{N}$.
Set $\Var{W} := (\Var{W}_1 \cup \overline{\Var{W}_2}) \subset \Var{M}$, where the overline denotes the closure in the Zariski topology. Note that $\Var{M}\setminus\Var{W} \subset \Var{G}$.

Let $\vect{x} \not\in \Var{W}$. Since the derivative $\deriv{\Phi}{\vect x}$ is injective, there exists a local diffeomorphism between an open neighborhood $\Var{M}_0 \subset \Var{M}$ of $\vect{x}$ and an open neighborhood $\Var{N}_0\subset \Var N$ of $\Phi(\vect{x})$. By restricting neighborhoods, we can assume that the Euclidean closure of $\Var{N}_0$ is contained in the smooth locus of $\Var{N}$ and that $\Var{M}_0$ is contained in $\Var{M}\setminus\Var{W}$. Take a tubular neighborhood $\Var{T}$ of $\Var{N}_0 \subset \R^N$ that does not intersect $\Var{N}\setminus\Var{N}_0$, and let $h$ be its height; note that $h > 0$, because the closure of $\Var{N}_0$ does not contain singular points of $\Var{N}$. Then, there exists an open ball $B_{\delta}(\Phi(\vect{x}))$ in $\R^N$ of positive radius $0 < \delta < h$, centered at $\Phi(\vect{x})$, whose intersection with $\Var{N}$ is contained in $\Var{N}_0$. By construction $B_{\delta}(\Phi(\vect{x})) \subset (\Var{N}_0 \cup \Var{T})$. It follows from the triangle inequality that the closest point on $\Var{N}$ to any point of $\Var{B}_{\vect{x}} := B_{\delta/2}(\Phi(\vect{x}))$ is contained in $\Var{N}_0 \subset \Phi(\Var{M}) \subset \Var{N}$. Since $\Var{N}_0 = \Phi(\Var{M}_0)$ and because $\Var{M}_0 \subset \Var{G}$, it follows that $\Var{B}_{\vect{x}} \subset \Var{B}$ for all $\vect{x} \in \Var{M}\setminus\Var{W}$.
\end{proof}

\section{Numerical experiments}
Here we experimentally verify the dependence of the multiplicative constant on the geometric condition number for a special case of PIP \refeqn{eqn_riemannian_optim_problem}, namely the tensor rank decomposition (TRD) problem. The model is
\[
 \Phi : \Var{S} \times \cdots \times \Var{S} \to \R^N, (\sten{a}{1}{1} \otimes \cdots \otimes \sten{a}{1}{d}, \ldots, \sten{a}{r}{1}\otimes\cdots\otimes\sten{a}{r}{d}) \mapsto \sum_{i=1}^r \sten{a}{i}{1} \otimes \cdots \otimes \sten{a}{i}{d},
\]
where $d \ge 3$, $N = m_1 \cdots m_d$, and $\Var{S} \subset \R^N$ is the manifold of $m_1 \times \cdots \times m_d$ rank-$1$ tensors \cite{Harris1992}. The image of $\Phi$ is called a \emph{join set} and the PIP is a special case of the \emph{join decomposition problem} \cite{BV2017}. To put emphasis on the join structure of the image of $\Phi$, we denote $\Var J:=\Phi(\Var{S} \times \cdots \times \Var{S})$.

In the numerical experiments of this section we apply a RGN method to $\min_{\vect{x} \in \Var{M}} \frac{1}{2} \|\Phi(\vect{x}) - \tensor{A}\|^2$, where $\Var{M} := \Var{S} \times \cdots \times \Var{S}$ is the $r$-fold product manifold of $\Var{S}$, and $\tensor{A} \in \R^N$ is the given tensor to approximate. We choose the retraction operator $R : \Var{T}\Var{M} \to \Var{M}$ from \cite{BV2017b}.

The projectivization of the manifold $\Var{S}$ is called the Segre variety; it is a smooth, irreducible projective variety with affine dimension $\dim \Var{S} = 1 + \sum_{k=1}^d (m_k-1)$. The problem of computing the dimension of the Zariski-closure $\overline{\Var{J}}$, which is called the $r$-secant variety of $\Var{S}$, has been classically studied; see \cite[Section 5.5]{Landsberg2012} for an overview. The results of \cite{COV2014} entail that the dimension equality {$\dim \Var{M} = \dim \overline{\Var{J}}$} is satisfied for all $r \cdot \dim \Var{S} < N$ and $N \le 15000$, subject to a few theoretically characterized exceptions. In the example below, {we take $r=2$ for which the dimension equality is always satisfied \cite{AOP2009}.} Hence, \refprop{prop_measure} entails that the injectivity assumption in \refthm{thm_rgn_convergence} is satisfied at least on a set of positive Lebesgue measure. Therefore, the convergence rate of the RGN method is influenced by the geometric condition number of the optimal parameters $\vect{x}_\star \in \Var{M}$ that minimizes the objective function.

We showed in \cite[Section~5.1]{BV2017} that the condition number of the above PIP at $\vect{x} = (\sten{a}{i}{1}\otimes\cdots\otimes\sten{a}{i}{d})_{i=1}^r$ is $\kappa(\vect{x}) = \bigl( \varsigma_{m}(U) \bigr)^{-1}$,  where $m = r \cdot \dim \Var{S}$, and the matrix $U\in \R^{N\times m}$ is given by $U = \left[\begin{smallmatrix} U_1 & \cdots & U_r \end{smallmatrix}\right]$ with
\[
U_i :=
\begin{bmatrix}
\tfrac{\sten{a}{i}{1}}{\|\sten{a}{i}{1}\|} \otimes \cdots \otimes \tfrac{\sten{a}{i}{d}}{\|\sten{a}{i}{d}\|}
& Q_{1,i} \otimes \tfrac{\sten{a}{i}{2}}{\|\sten{a}{i}{2}\|} \otimes \cdots \otimes \tfrac{\sten{a}{i}{d}}{\|\sten{a}{i}{d}\|}
& \cdots
& \tfrac{\sten{a}{i}{1}}{\|\sten{a}{i}{1}\|} \otimes \cdots \otimes \tfrac{\sten{a}{i}{d-1}}{\|\sten{a}{i}{d-1}\|} \otimes Q_{d,i}
\end{bmatrix},
\]
where $Q_{k,i} \in \R^{m_k \times (m_k-1)}$ is a matrix containing an orthonormal basis of the orthogonal complement of $\sten{a}{i}{k}$ in~$\R^{m_k}$. These expressions allow us to compute the condition number at any given decomposition $\vect{x} \in \Var{M}$.%

All of the following computations were performed in Matlab R2016b. For clearly illustrating the rates of convergence, we used variable precision arithmetic (vpa) with $400$ digits of accuracy. Since performing experiments in vpa is very expensive, we consider only the tiny example of a rank-$2$ tensor of size $3 \times 3 \times 3$. We showed in \cite{BV2017b} that an implementation of the RGN method with trust region globalization strategy applied to the above PIP formulation, can outperform state-of-the-art optimization methods for the tensor rank approximation problem on small-scale, dense problems with $r \sum_{k=1}^d m_k \lesssim 1000$.

\subsection{Experiment 1: Random perturbations}
Consider the following parametrized tensors in $\R^3\otimes \R^3\otimes \R^3$. For $s\geq 0$ we let $\vect{x}(s) = (x(s),\vect{e}_2^{\otimes 3})\in \Var S \times \Var S$ where $x(0) := \vect{e}_1^{\otimes 3}$ and $x(s) := (\vect{e}_2 - 2^{-s} \vect{e}_1)^{\otimes 3}$ for $s>0$ and $\vect{e}_k \in \R^{3}$ is the $k$th standard basis vector. Then, we define
\(\tensor{A}(s) := \Phi(\vect  x(s))=x(s) + \vect{e}_2^{\otimes 3}\).

For every $s = 0,1,3,5$, we created a perturbed decomposition $\vect{x}'(s) = R(\vect{x}(s), 10^{-20} \cdot \tfrac{\tensor{X}}{\Vert \tensor{X}\Vert})$, where $R$ is the aforementioned retraction and the entries of $\tensor{X}$ are chosen from the standard normal distribution. We also sampled a perturbed tensor $\tensor{A}'(s) := \tensor{A}(s) + 10^{-10} \tfrac{\tensor{Z}}{\|\tensor{Z}\|}$, where the entries of $\tensor{Z}$ are also standard normal.

For verifying the linear convergence, the RGN method was applied to $\tensor{A}'(s)$ while the quadratic convergence was checked by applying the RGN method to $\tensor{A}(s)$, both starting from $\vect{x}'(s)$. In all tested cases, the RGN method generated a sequence $\vect{x}_1(s), \vect{x}_2(s), \ldots$ in $\Var{M}$ converging to a local minimizer $\vect{x}_\star(s) \in \Var{M}$. The residual $\|F(\vect{x}_\star(s))\|$ was approximately $7 \cdot 10^{-11}$ in all cases.

The results are shown in Figures \ref{fig_convergence_plots}(a) and \ref{fig_convergence_plots}(c), illustrating respectively the predicted linear and quadratic convergence. The graphs confirm the prime message of this letter:
\emph{the convergence speed of the RGN method deteriorates when the geometric condition number increases}, as \refthm{thm_rgn_convergence} predicts.

\begin{figure}[tb!]
\centering
\includegraphics{./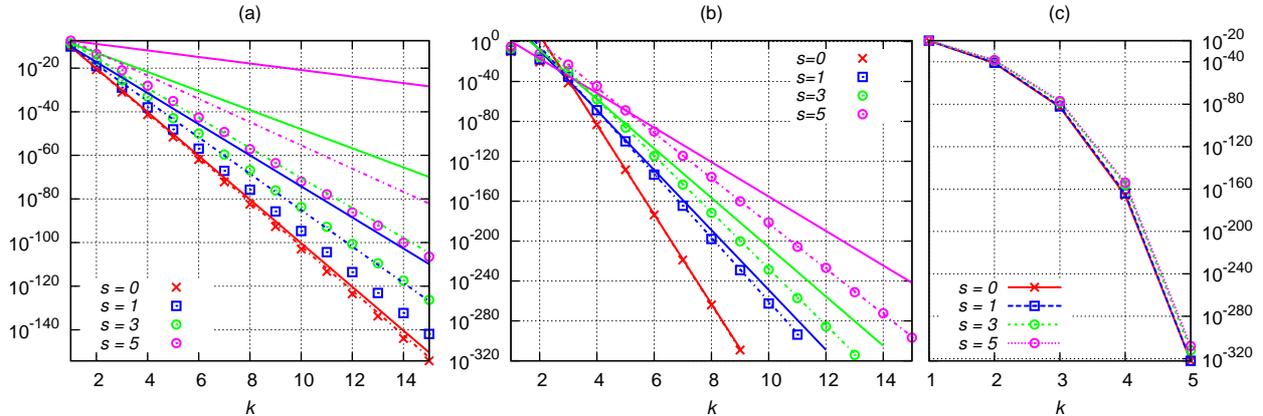}
\caption{
The data points show the distance $\| \vect{x}_k(s) - \vect{x}_\star(s) \|$ for the sequence of points $\vect{x}_k(s)$, $k=1, 2, \ldots$, computed by the RGN method in function of $k$ for $s = 0,1,3,5$. The condition numbers of the local optimizers, rounded to two significant digits, were $\kappa = 1.0 \cdot 10^0, 2.7 \cdot 10^1, 1.5\cdot 10^3,$ and $9.3 \cdot 10^4$ for respectively $s=0,1,3,$ and $5$.
In (a) and (b) linear convergence rates are illustrated when $\|F(\vect{x}_\star(s))\| \ne 0$, and (c) shows the quadratic convergence rate when the residual $\|F(\vect{x}_\star)\|$ vanishes.
Figures (a) and (c) show instances of random perturbations, while (b) employed adversarial perturbations.
In figure (b), the linear rate of convergence is not immediately observed; therefore the theoretical estimates were applied starting from the least $k$ where $\| \vect{x}_k(s) - \vect{x}_\star(s) \| \le 10^{-50}$.
In figures (a) and (b), the full lines (\protect\tikz{ \protect\draw[very thick,color=red] (0,0) -- (.7,0); \protect\draw (0,0) node {};}, \protect\tikz{ \protect\draw[very thick,color=blue] (0,0) -- (.7,0); \protect\draw (0,0) node {};}, \protect\tikz{\protect\draw[very thick,color=green] (0,0) -- (.7,0); \protect\draw (0,0) node {};}, \protect\tikz{ \protect\draw[very thick,color=magenta] (0,0) -- (.7,0); \protect\draw (0,0) node {};}) indicate the theoretical upper bounds from \refthm{thm_rgn_convergence}, i.e., $\frac{(1+\sqrt{5}) C \kappa^2}{2 \alpha}$ from \refeqn{wedin_appl}. The dashed lines (\protect\tikz{ \protect\draw[very thick,dash pattern={on 4pt off 2pt on 1pt off 2pt},color=red] (0,0) -- (0.8,0); \protect\draw (0,0) node {};}, \protect\tikz{ \protect\draw[very thick,color=blue,dash pattern={on 4pt off 2pt on 1pt off 2pt}] (0,0) -- (.8,0); \protect\draw (0,0) node {};}, \protect\tikz{\protect\draw[very thick,color=green,dash pattern={on 4pt off 2pt on 1pt off 2pt}] (0,0) -- (.8,0); \protect\draw (0,0) node {};}, \protect\tikz{ \protect\draw[very thick,dash pattern={on 4pt off 2pt on 1pt off 2pt},color=magenta] (0,0) -- (0.8,0); \protect\draw (0,0) node {};}) indicate the upper bounds obtained from \refthm{thm_rgn_convergence}, where the constants on the right-hand sides of \refeqn{wedin_appl} are \emph{estimated heuristically} as $E(s)$.}
\label{fig_convergence_plots}
\end{figure}

As the full lines in \reffig{fig_convergence_plots}(a) show, the multiplicative constants derived in \refthm{thm_rgn_convergence} can be pessimistic, especially when the condition number is large. We attribute this to bound \refeqn{wedin_appl}; while it is sharp \cite[p. 152]{MPT}, it is very pessimistic in this experiment. A qualitatively better description of the convergence is shown as the dashed lines in \reffig{fig_convergence_plots}(a), where the constant in \refeqn{wedin_appl} was estimated heuristically as $E(s) := \frac{\| J^\dagger - J_\star^\dagger \|}{\|\vect{x}_1(s) - \vect{x}_\star(s)\|}$, where $J$ is the matrix of $\deriv{F}{\vect{x}_1(s)}$ and $J_\star$ is the matrix of $\deriv{F}{\vect{x}_\star(s)}$ as in the proof of \refthm{thm_rgn_convergence}.

\subsection{Experiment 2: Adversarial perturbations.}
For illustrating the sharpness of the bound \refeqn{wedin_appl}, we performed an additional experiment with tensors in $\R^3\otimes \R^3\otimes \R^3\cong \R^{27}$. This time we constructed an adversarially perturbed starting point $\vect{x}'(s)$ by generating a random tensor {$\tensor{N} \in \R^{27}$} with entries sampled from the standard normal distribution, then computing numerically the gradient $\vect{g}$ of the function $f(\vect{x}) = \frac{1}{2}\| ((\deriv{\Phi}{\vect{x}(s)})^\dagger - (\deriv{\Phi}{\vect{x}})^\dagger) \tensor{N} \|^2$, and finally {setting} $\vect{x}'(s) = R(\vect{x}(s), 10^{-20} \vect{g})$. As adversarial perturbation of $\tensor{A}(s) = \Phi(\vect{x}(s))$, we chose $\tensor{Z} \in \R^{27}$ equal to the left singular vector $\vect{u}_{14} \in \R^{27}$ corresponding to the smallest nonzero singular value $\varsigma_{14}$ of $\deriv{\Phi}{\vect{x}'(s)}$; note that $\dim \Var M = r \cdot \dim \Var{S} = 2(1+2+2+2)=14$. As before, we set $\tensor{A}'(s) = \tensor{A}(s) + 10^{-10}\frac{\tensor{Z}}{\|\tensor{Z}\|}$.

The result of applying the RGN method to $\tensor{A}'(s)$ from starting point $\vect{x}'(s)$ is shown in \reffig{fig_convergence_plots}(b). In all cases, the method converged. The condition numbers at the local minimizers $\vect{x}_\star(s)$ are about the same as in the previous experiment: the respective relative differences were less than $10^{-2}$. The final residuals $\|F(\vect{x}_\star(s) )\|$ depended on $s$, however; they were $6.90 \cdot 10^{-46}$, $8.60 \cdot 10^{-34}$, $3.58 \cdot 10^{-31}$ and $1.10 \cdot 10^{-26}$ for respectively $s=0,1,3,$ and $5$. This is why the convergence may appear at first sight to be better than in the case of random perturbations. Nevertheless, it is observed that the theoretical estimate in \refthm{thm_rgn_convergence} is indeed much closer to the observed convergence. In fact, the bounds involving the heuristic estimate $E(s)$ are visually indistinguishable from the actual data. Note in particular for $s=0$, where $\kappa=1$, that also the theoretical convergence rate from \refthm{thm_rgn_convergence} is visually indistinguishable from the data, illustrating the sharpness of the bound in \refeqn{wedin_appl}.

\vspace{-.75em}
\section*{Acknowledgements}
We thank two anonymous reviewers for their insightful and critical remarks that improved this letter.

\vspace{-.75em}
\section*{References}
\bibliographystyle{elsarticle-harv}
\bibliography{BV}

\end{document}